\renewcommand{\i}{\ensuremath{\text{\normalfont I}}}
\newcommand{\ii}{\ensuremath{\text{\normalfont I\!I}}}
\newcommand{\iii}{\ensuremath{\Gamma}}
\newcommand{\ici}{\ensuremath{\text{\normalfont I,I}}}
\newcommand{\icii}{\ensuremath{\text{\normalfont I,I\!I}}}
\newcommand{\iici}{\ensuremath{\text{\normalfont I\!I,I}}}
\newcommand{\iicii}{\ensuremath{\text{\normalfont I\!I,I\!I}}}
\newcommand{\iiciii}{\ensuremath{\text{\normalfont I\!I,}\Gamma}}
\newcommand{\iiicii}{\ensuremath{\Gamma,\text{\normalfont I\!I}}}
\newcommand{\wh}{\widehat}
\newtheorem{theorem}{Theorem}[section]
\newtheorem{remark}[theorem]{Remark}
\begin{document}
\preprint{}

\title{Absorbing boundary conditions for the time-dependent Schr\"odinger-type equations in $\mathbb R^3$.}
\author{Xiaojie Wu}
\email{xiaojiewu@berkeley.edu}
\affiliation{Department of Mathematics, University of California, Berkeley}
\author{Xiantao Li}
\email{xxl12@psu.edu}
\affiliation{Department of Mathematics, The Pennsylvania State University}

\date{\today}
\begin{abstract}
  Absorbing boundary conditions are presented for three-dimensional time-dependent Schr\"odinger-type of equations as a means to reduce the cost of the quantum-mechanical calculations. The boundary condition is first derived from a semi-discrete approximation of the Schr\"odinger equation with the advantage that the resulting formulas are automatically compatible with the finite-difference scheme and no further discretization is needed in space.  The absorbing boundary condition is expressed as a discrete Dirichlet-to-Neumann (DtN) map, which can be further approximated in time by using rational approximations of the Laplace transform to enable a more efficient implementation. This approach can be applied to domains with arbitrary geometry.  The stability of the zeroth order and first order absorbing boundary conditions is proved. We tested the boundary conditions on  benchmark problems. The effectiveness is further verified by a time-dependent Hartree-Fock model with Skyrme interactions. The accuracy in terms of energy and nucleon density is examined as well.
\end{abstract}
\maketitle

\section{Introduction}
Quantum-mechanical simulations, expressed in terms of the Schr\"odinger equation, provide a fundamental description of physical properties in chemistry and condensed-matter physics \cite{martin2004, helgaker2014}. On the other hand, due to the small length scale, an outstanding challenge is the size of the system that one can simulate, even with the rapid growth of computing power. 
Computer simulations often face the scenarios where the electrons are being emitted out of the computational domain, e.g., the photoionization process. Recently there has been a great deal of renewed interest in such issues in the quantum transport problem \cite{xie2014complex}, optical response of molecules \cite{yabana2006real}, open quantum systems \cite{Hellumns1994}, etc.

One approach to address the aforementioned  issue is by absorbing boundary conditions (ABCs) which reduce the problem to a much smaller computational domain and yield results as if the simulation is being performed in a much larger (or unbounded) domain. Rather than simply removing the exterior region, the ABCs provide an efficient approach to mimic the influence from the surrounding environment.  There are several different approaches to construct and implement ABCs, most of which involve the derivation and approximation of the Dirichlet-to-Neumann (DtN) map. They are also commonly referred to as non-reflecting, transparent,  or radiating boundary conditions. By replacing the surrounding region with the ABCs, the computational effort can be focused on simulating the region of interest. 

There seemed to be separate development of ABCs for quantum-mechanical simulations. For example, there is a great deal of progress in developing ABCs in physics and chemistry. These  ABCs can be roughly classified into four approaches: 1) the exterior complex scaling (ECS) \cite{Scrinzi2014,NICOLAIDES197811,SIMON1979211,mccurdy2004solving}, where the coordinate outside a fixed radius is scaled into the complex plane; 2) the mask function method \cite{marques2003,tafipolsky2006,Wang201107}, where the wave functions are gradually scaled to zero; 3) complex absorbing potentials (CAP) \cite{riss1996investigation,Riss_1993,MUGA2004357}, which is introduced outside the boundary by adding a complex potential to the Hamiltonian; and 4) coordinate scaling, which scales the spatial coordinate by a time-dependent factor \cite{Soloviev1985,sidky2000} to make wave functions compatible with the physical boundary conditions. 
These methods are not constructed to directly approximate the exact boundary condition. Rather, they are very much goal-oriented. Their direct aim is to absorb the electrons that move outside the computational domain. These methods in general are easy to implement, and are quite robust in practice. Meanwhile, the effectiveness depends heavily on the choice of the parameters, e.g., in the CAP methods, the size of the boundary region and the magnitude of the potential. 
In some cases, the efficiency and accuracy (reflections) are even debatable  for decades \cite{Riss_1998, He2007, Tao2009, Scrinzi2010, Ge1998}. The ABCs can also be introduced for the density-matrix in the Liouville von-Neumann equation \cite{li2019absorbing}.

 In the applied math community, much effort has been focused on one-dimensional problems, with a few works for multi-dimensional problems with flat boundary, for a comprehensive review, see \cite{antoine2008review, han2013}. There, a different path is followed:  The {\it exact} ABCs are first derived by solving an exterior problem, many of which are written as a Dirichlet-to-Neumann (DtN) or Neumann-to-Dirichlet (NtD) map for the continuous time-dependent Schr\"odinger equation (TDSE) \cite{han2005finite} (or temporally discrete model \cite{antonie2003}, spatially discrete model \cite{alonso2002,alonso2003}, and fully discrete model \cite{arnold1998}). The same technique has been used by Ermolaev et al. \cite{ermolaev1999, ermolaev2000} to treat electron dynamics under a laser field, where the exact boundary condition, referred to as an integral boundary condition, is derived with the help of the Green's function for a free electron.  Then, the exact ABCs, which involve a time-convolution, are approximated to avoid the repeated calculations of the integrals. The most predominant method is by finite sums of exponentials in the real-time space \cite{arnold2003,Jiang2004} or rational functions in the Laplace domain or the Fourier space \cite{fevens1999,shibata1991,szeftel2004,xu2006absorbing} to facilitate a fast evaluation. The convergence, stability, and efficiency of these methods have been thoroughly investigated \cite{antoine2008review, han2013}. The most extensively studied case is the one-dimensional case.  The extensions to high dimensional cases  can be found in  \cite{jiang2008,antoine2004,Alpert2002,han2004,Xu2007}; but these methods are often limited only to special geometries of the interior domain. Another classical strategy is the perfectly matched layer (PML) \cite{berenger1994}, where one first constructs a buffer layer so that the outgoing waves in the computational domain are exactly preserved (perfect matching). The most common approach is to introduce a complex stretching of the spatial coordinate to derive a modified equation in the buffer zone, and then the resulting models are discretized simultaneously in the implementation. The PML  has been applied to the nonlinear Schr\"odinger equation by Zheng \cite{zheng2007}. Similar to the CAP methods, the parameters in the PML method have to be calibrated a priori. 



In the three-dimensional case, one challenge in implementing the ABCs stems from the fact that the kinetic energy in the Schr\"{o}dinger equation often needs to be approximated by high-order finite-difference schemes, either to gain sufficient accuracy, or to reduce the computational cost by using larger grid size.  Therefore, the ABCs that are developed from {\it continuous} PDEs, especially those that are expressed in terms of the normal derivatives of the solution at the boundary \cite{bayliss1980radiation,higdon1986absorbing},  are difficult to be integrated with the discretization in the computational domain. For instance, due to the large finite-difference stencil, there are several layers of grids at the boundary, where the ABCs need to be applied, which is non-trivial.  To overcome this difficulty,
 we formulate the ABCs based on a semi-discrete approximation of the TDSE obtained from high-order finite-difference schemes in the entire domain. The resulting boundary condition does not need to be further discretized in space. It can be readily implemented. 
Meanwhile, the time-convolution in the corresponding DtN map is formulated and approximated by rational functions in Laplace domain. This approximation reduces the exact DtN map to ordinary differential equations in the time domain, which is much more efficient in practice. We also tackle the important issue that when the domains is of arbitrary geometry, there is no simple representations of the DtN map, e.g., in terms of a pseudo-differential operator \cite{antonie2004,szeftel2004}. In this case, it is expressed as a matrix-function which involves the Hamiltonian in the exterior. We will employ the discrete boundary-element method  \cite{Li2012} to 
evaluate the DtN map in the rational interpolation. 

We will demonstrate that the first-order ABC obtained from this approach correspond to a CAP method. This offers an interesting connection between the DtN map and an existing  method. But the Hamiltonian that represents the absorbing potential is computed from the exact ABC, rather than empirically built (e.g., a diagonal matrix). We will also show that the second-order ABC is more general than CAP. In most cases, it is more accurate than the first-order approximation. 

This paper is organized as follows: In section \ref{sec:2}, the solution of the exterior problem, which leads to the exact ABC, is formulated. We approximate the exact boundary condition based on its Laplace transform in section \ref{sec:3}. We discuss the choices of the interpolation conditions such that the dynamics is stable. In section \ref{sec:4}, both 1D and 3D numerical simulations are performed to verify the stability and effectiveness. Further, we test the ABCs on a time-dependent Hartree-Fock (TDHF) model with localized interactions. 

\section{Formulation of a discrete DtN map for Schr\"odinger-type of equations}
\label{sec:2}
 This paper is concerned with the time-dependent Schr\"{o}dinger equations in a domain $\Omega \subset \mathbb R^d$, 
\begin{equation}
  \label{eq:1}
  i\frac{\partial }{\partial t}\phi(\bm x, t) = \hat{H}\phi(\bm x, t), \quad \bm x \in \Omega.
\end{equation}
Here we have scaled the Planck constant and mass to unity. $i$ is the imaginary unit. $\phi(\bm x, t)$ is the wave function for an electron or nucleon in the $d$-dimensional quantum system. 
In general, the Hamiltonian operator can be written as $\hat H = -\frac{\nabla^2}2 + V(\bm x,t)$,  with $V$ being the total potential of the system. 
A Schr\"odinger-like equation can be derived from the exact many-body Schr\"odinger equation with various mean-field approximations of the many-body wave function, such as the Hartree-Fock approach \cite{baerends1973self} and the time-dependent density-functional theory \cite{marques2006time}. For simplicity, we illustrate the formulation with the simplest model, and demonstrate the applications of other Schr\"odinger-type equations in section \ref{sec:tdhf}.

We suppose that $\Omega_\i$ is a subdomain of interest in $\Omega$. 
Rather than solving the Schr\"{o}dinger equations in the entire domain $\Omega,$ we 
take into account the influence from the surrounding region, here denoted by $\Omega_\ii$, by deriving ABCs at the boundary of $\Omega_\i$.
$\Omega = \Omega_\i\cup \Omega_\ii$.    In the exterior region $\Omega_\ii$, we assume that the operator $\hat H$ does not explicitly depend on $\bm x$. More specifically, $V(\bm x)  = 0$, or more generally a constant, for any $\bm x\in \Omega_\ii$. 

Most ABCs have been formulated in the continuous case \cite{szeftel2004,antonie2003,baskakov1991implementation,Hellumns1994}, which has to be further discretized in order to be combined with the finite-difference or finite-element approximation in the interior of $\Omega_\i$. For easier implementation,
our approach starts directly with a spatially discrete model. Namely, we first consider the semi-discrete approximation of \eqref{eq:1} in the entire domain, and 
regard it as a full/exact model. This has been done for the acoustic wave equations in \cite{givoli1998discrete}. But efficient evaluations of the ABCs were not fully discussed there.

Let $\{\bm x_j\}$ be the set of $n_\i$ grid points with a constant spacing of $h$ in the interior $\Omega_\i$ and $n_\ii$ grid points in the exterior $\Omega_\ii$. $n_\i\ll n_\ii$. $n_\ii$ can be infinite. In real-space methods, the Hamiltonian operator in \eqref{eq:1} can be approximated  by a representation over numerical grids  (e.g., \cite{octopus}), finite elements (e.g., \cite{motamarri2013higher}), or atom centered basis set. These spatial approximations reduce the problem to a discrete model in space. Here we take the simplest approach, the finite difference method, to demonstrate the formulation of the ABC. To ensure high order accuracy, these finite difference methods often employ large stencils. For example, a one dimensional Laplacian operator, acting on a continuous function $\psi(x,t)$,  can be approximated by,
 \[ \frac{\partial ^2}{\partial x^2} \psi(x,t) \approx  \frac{-\psi_{-2}+16\psi_{-1}-30\psi_{0}+16\psi_1-\psi_2}{12h^2}. \]
Here $\psi_j(t)=\psi(x + jh,t)$ with $h$ being the grid spacing. In the high-dimensional case, this finite-difference formula can be applied in each direction \cite{beck2000real}. 

Let us introduce some notations to make the derivation more transparent. These notations are largely adopted from domain decomposition methods \cite{quarteroni1999domain}. To begin with,  the indices of the nodal points are sorted as $\bm x_1, \dots, \bm x_{n_\i}\in \Omega_\i$, and $\bm x_{n_\i+1}, \dots, \bm x_{n_\i+n_\ii}\in \Omega_\ii$. Due to the discretization in space, the wave functions are written in vectors, and the semi-discrete model can be written in a compact form as follows:
\begin{equation}
  \label{eq:22}
  \left\{
    \begin{aligned}
      i\dot{\phi}_\i(t) &= H_{\ici}\phi_\i(t)+H_{\icii}\phi_\ii(t),\\
      i\dot{\phi}_\ii(t) &= H_{\iici}\phi_\i(t)+H_{\iicii}\phi_\ii(t),
    \end{aligned}
  \right.
\end{equation}
where $\bm \phi_\i = [\phi(\bm x_k)]_{\bm x_k\in \Omega_\i}$ and $\bm \phi_\ii= [\phi(\bm x_k)]_{\bm x_k\in \Omega_\ii}$. $\bm \phi_\i\in \mathbb C^{n_\i}$ and $\bm \phi_\ii\in \mathbb C^{n_\ii}$. $\dot{}$ means the derivative with respect to the time $t$. Following the same partition,  the discretization of the Hamiltonian operator $H$ is structured as follows,
\begin{equation}
  \label{eq:32}
  H =
  \begin{bmatrix}
    H_\ici & H_\icii\\
    H_\iici & H_\iicii
  \end{bmatrix}.
\end{equation} 
Since the Schr\"{o}dinger equation \eqref{eq:1} has been discretized in space, 
we use $\dot{\quad}$ to denote the time derivatives hereafter.  

Our goal is to simplify the second equation in \eqref{eq:22}, while retaining the first equation. 
 $H_\ici$ and $H_\iicii$ are respectively the discretized Hamiltonian operator in $\Omega_\i$ and $\Omega_\ii$. $H_\ici$ can be a nonlinear operator. $H_\icii$ and $H_\iici$ are the off-diagonal blocks representing the coupling between $\Omega_\i$ and $\Omega_\ii$. 
 Notice that due to the finite-difference approximation of the kinetic energy term, all these matrices are sparse.

The boundary condition will be expressed in terms of the values of the wave functions near the boundary of $\Omega_\i$. To this end, we denote $\Gamma$ as the boundary of $\Omega_\i$, such that
\begin{equation}
  \label{eq:25}
  \Gamma=\{ \bm x_j\in \Omega_\i\vert \text{ if there exists } \bm x_k \in \Omega_\ii \text{ such that } H_{jk} \ne 0\}.
\end{equation}

The width of $\Gamma$ depends on the width of the finite-difference stencil.  We define $\bm \phi_\Gamma\in \mathbb C^{n_\Gamma}$ as a vector formed by all $\bm \phi_j$ for $  \bm x_j \in \Gamma$, where $n_\Gamma$ is the number of grid points in $\Gamma$. The vector $\bm \phi_\i$ can be reordered so that the first $n_\Gamma$ components are associated with the grid points in $\Gamma$, i.e.,
\begin{equation}\label{eq: reorder}
\bm \phi_\i =
\begin{bmatrix}
  \bm \phi_\Gamma\\
  \bm \phi_{\i\backslash\Gamma}
\end{bmatrix}.
\end{equation}

The boundary region $\Gamma$ is defined in such a way that there is no direct coupling between the points in the interior of $\Omega_\i$ and those in $\Omega_\ii$. This is reflected in the off-diagonal block of the Hamiltonian, 
\begin{equation}\label{eq: h2g}
H_\iici = \big[H_{\iiciii} \;\; H_{\ii,\i\backslash\Gamma}\big]=\big[H_{\iiciii} \;\; 0\big].
\end{equation}

In general, this procedure can be carried out by defining a restriction operator $E$ to extract the components of a function that correspond to grid points at the boundary $\Gamma$ from a function defined in  $\Omega_\i$. Namely, 
\begin{equation}\label{eq: phig}
    \bm \phi_\Gamma=E\bm \phi_\i.
\end{equation}
With the reordering in \eqref{eq: reorder}, the matrix $E\in \mathbb R^{n_\Gamma\times n_\ii}$ can be explicitly expressed as $E=[I_{n_\Gamma} \;\; 0]$. Further, \eqref{eq: h2g} can now be
simply written as $H_{\iiciii}=H_\iici E^T$.
Since $E E^T = I_{n_\Gamma}$, we also have, 
\begin{equation}\label{eq: hg2}
  H_\icii = E^T H_{\iiicii}.
\end{equation}

\medskip

We are now set to formulate the ABC. We first consider the influence of the wave functions in $\Omega_\ii$ on the wave functions in $\Omega_\i$, and we define 
\begin{equation}
  \bm f_\Gamma=H_{\iiicii}\bm \phi_\ii.
\end{equation}
Then the first equation reads,
\begin{equation}
  \label{eq:omega1}
  \dot{\bm \phi}_\i(t) = -iH_\ici\bm \phi_\i(t) -i E^T \bm f_\Gamma(t).
\end{equation}
Here $ \dot{} $ denotes the derivative with respect to the time variable $t$.

At this point, we observe that what is needed to compute $\bm \phi_\i $ in time  is $\bm f_\Gamma(t).$
For this purpose, let us take the Laplace transform of the second equation in \eqref{eq:22}. Due to the assumption that $\Phi_\ii(0)=0,$ one has,
\begin{equation}
  \label{eq:3}
  is\bm \Phi_\ii(s) = H_\iicii\bm \Phi_\ii(s) + H_\iici\bm \Phi_\i(s),
\end{equation}
where $\bm \Phi_\ii(s) = \mathcal L \{\bm \phi_\ii\}(s)$ and $\bm \Phi_\i(s) = \mathcal L \{\bm \phi_\i\}(s)$. By defining $\widetilde H_\iicii(s) = H_\iicii-isI$, equation \eqref{eq:3} simply reads
\begin{equation}
 \label{eq:sum0}
  \widetilde H_\iicii(s)\bm \Phi_\ii(s) =-H_\iici\bm \Phi_\i(s).
\end{equation}
Formally, its solution can be expressed as
\begin{equation}
  \label{eq:34}
  \bm \Phi_\ii(s) = -\widetilde H_\iicii^{-1}H_\iici\bm \Phi_\i(s).
\end{equation}

 Due to the locality of $H_\iicii$, the right hand side of \eqref{eq:34} can be further reduced. Notice that,
$$H_\iici\bm \Phi_\i=H_{\iiciii} E \bm \Phi_\i = H_{\iiciii}\bm \Phi_\Gamma.$$
The first step is from the identity \eqref{eq: h2g}, and the second step used \eqref{eq: phig}. This simplifies the solution to,
\begin{equation}
  \label{eq:34'}
  \bm \Phi_\ii(s) = -\widetilde H_\iicii^{-1}H_{\iiciii}\bm \Phi_\iii(s).
\end{equation}

Since $H_\iicii$ is a $n_\ii\times n_\ii$ matrix, it is in general impractical to solve the linear system \eqref{eq:34'} numerically. However,
the main observation from \eqref{eq:omega1} is that we only need $\bm f_\Gamma$ to keep the computation in $\Omega_\i$. We let $\bm F_{\Gamma}$
be the Laplace transform of $\bm f_\Gamma$. By left-multiplying the equation above by $H_{\iiicii}$, we obtain,
\begin{equation}
  \label{eq:d2n}
  \bm F_{\Gamma}(s)=K(s)\bm \Phi_{\Gamma}(s).
\end{equation}
The matrix-valued function $K(s): \mathbb{R} \to \Omega_\iii \times \Omega_\iii$, which will play a key role in the numerical approximation,  is given by, 
\begin{equation}
  \label{eq:17}
  K(s) = -H_{\iiicii}\widetilde H_\iicii^{-1}(s)H_{\iiciii}= -H_{\iiicii}\big[ H_\iicii - is I\big]^{-1}H_{\iiciii}.
\end{equation}
 The mapping $K(s)$ is precisely the Dirichlet-to-Neumann (DtN) map of the problem  \eqref{eq:22} in the Laplace domain. Although $K(s)$ still involves 
 the inverse of a large matrix, it is much easier to compute than \eqref{eq:34'} due to the fact that only a small number of entries are needed.
 This is known as selected inversion \cite{linalg}.  The detailed solution to this problem, which involves a discrete boundary element method,  will be discussed in Section \ref{sec:3} and Appendix A. 
 
In the time domain, the DtN map becomes a time convolution,
\begin{equation}
  \label{eq:tbc}
  \bm f_{\Gamma}(t)=\int_0^t \kappa(t-\tau)\bm \phi_{\Gamma}(\tau) d\tau.
\end{equation}

With the DtN map, we incorporate \eqref{eq:tbc} into \eqref{eq:omega1}, and we arrive at a reduced problem \eqref{eq:22}, 
\begin{equation}
  \label{eq:exactBC}
  \dot{\bm \phi}_\i(t) = -iH_\ici\bm \phi_\i(t) -i E^T \int_0^t \kappa(t-\tau)  E \bm \phi_\i(\tau) d\tau,
\end{equation}
where $\kappa(t)$ is the real-time kernel function which corresponds to $K(s)$ in the Laplace domain. 

\medskip

The main difficulties in implementing the transparent boundary condition \eqref{eq:tbc}  are: i) there is no analytical expression for $K(s)$ or $\kappa(t)$ in general, other than the formula \eqref{eq:17} in terms of a large-dimensional matrix, in which case it is expensive to evaluate $K(s)$ or $\kappa(t)$ repeatedly. ii) the direct evaluation of the time-convolution integral adds up quickly to the computational cost: Since the kernel function  $\kappa(t)$ does not have compact support in time, long time integration is required to evaluate the exact boundary condition.

In light of these concerns, we will introduce further approximations in the next section.

\begin{remark}
Our procedure for reducing the problem is reminiscent of reduced-order modeling \cite{Bai2002,benner2015survey}. More specifically, the quantity $\bm f_\Gamma$ can be viewed as
the low-dimensional output, and $\bm \phi_\Gamma$ corresponds to the control quantity. 
 In the reduced-order literature, it is sometimes convenient to write \eqref{eq:3} into the alternative form
\begin{equation}
  \label{eq:19}
  i(s-s_0)(H_\iicii-is_0I)^{-1}\bm \Phi_\ii(s) = \bm \Phi_\ii(s) + (H_\iicii-is_0I)^{-1}H_\iici
  \bm \Phi_\i(s),
\end{equation}
where $s_0\in \mathbb C$ is a pre-selected scalar. In this case, the kernel function becomes
\begin{equation}
  \label{eq:27}
  K(s) = -H_{\iiicii}(I-i(s-s_0)(H_\iicii-is_0I)^{-1})^{-1}(H_\iicii-is_0I)^{-1}H_{\iiciii}.
\end{equation}
If we denote ${\mathcal {A}}=i(H_\iicii-is_0)^{-1}$ and $C=(H_\iicii-is_0I)^{-1}H_{\iiciii}$, the Taylor expansion of the kernel function $K(s)$ around $s=s_0$ reads
\begin{equation}
  \label{eq:28}
  K(s) = -H_{\iiicii}(C + {\mathcal A}C(s-s_0) + {\mathcal A}^2 C(s-s_0)^2 + {\mathcal A}^3C(s-s_0)^3 + \cdots).
\end{equation}
The coefficients (moments) in the expansion are  $M_0=-H_{\iiicii}C$, $M_1=-H_{\iiicii}{\mathcal A}C$, $M_2=-H_{\iiicii}{\mathcal A}^2C$, etc. 
One way to approximate the kernel function $K(s)$ is by rational functions with the same moments, which is known as {\it moment matching} \cite{Bai2002,baker1996pade}. One may also notice that \eqref{eq:28} is connected to a Krylov subspace $\mathcal K({\mathcal A}, C)$. However, for the problem considered here, the higher powers of ${\mathcal A}$ are much more difficult to compute. 
\end{remark}


\section{Approximation of the discrete DtN map}
\label{sec:3}
Consider the following rational functions:
\begin{equation}
  \label{eq:24}
  R_{m,m}(s) = (s^m-s^{m-1}B_0-\cdots-B_{m-1})^{-1}(s^{m-1}A_0+\cdots+A_{m-1}).
\end{equation}
$A_0, \cdots, A_{m-1}$ and $B_0, \cdots, B_{m-1}$ are $n_\Gamma\times n_\Gamma$ matrices to be determined. Here the integer $m\ge 0$ will be referred to as 
the {\it order} of the approximation.

The rational approximation reduces the DtN map in the Laplace domain to
\begin{equation}
  \label{eq:30}
  \bm F_\Gamma(s) = R_{m,m}(s)\bm \Phi_\Gamma(s).
\end{equation}
One advantage of the rational approximation is that in the time domain, the dynamics can be represented by an ODE,
\begin{equation}
  \label{eq:31}
  \bm f_\Gamma^{(m)}=B_0\bm f_\Gamma^{(m-1)}+\cdots +B_{m-1}\bm f_\Gamma+A_0\bm \phi_\Gamma^{(m-1)}+\cdots+A_{m-1}\bm \phi_\Gamma,
\end{equation}
assuming appropriate initial conditions. The superscript $^{(m)}$ denotes the $m$-th derivative. Now, the non-local time-convolution in the DtN map (Eq. \eqref{eq:exactBC}) is replaced by a linear ODE system, which is much more efficient in practical implementations. 
The reduced model (Eq. \eqref{eq:22}) is replaced by,
\begin{equation}
\label{eq:reduced}
  \left\{
    \begin{aligned}
      &\frac{\partial}{\partial t}\bm \phi_\i(t) = -iH_\ici\bm \phi_\i(t) -i H_\icii \bm \phi_\ii(t), \\
      &\bm f_\Gamma^{(m)}=B_0\bm f_\Gamma^{(m-1)}+\cdots +B_{m-1}\bm f_\Gamma+A_0\bm \phi_\Gamma^{(m-1)}+\cdots+A_{m-1}\bm \phi_\Gamma.
    \end{aligned}
  \right.
\end{equation}

The remaining question is how to determine the coefficients $A_i$ and $B_i$. This is done by interpolation. Namely we match $R_{m,m}(s)$ to
$K(s)$ at certain points. In principle we need $2m$ points of $(s_i, K(s_i))$ to determine these coefficients. In general, one may hope that the
accuracy would improve as the number of interpolation points increases. However, a more subtle issue is the stability of the system \eqref{eq:reduced}.  The semi-discrete models are quite similar to molecular dynamics models, for which the stability of ABCs has been analyzed in \cite{li2009stability}. In particular, the Lyapunov functional approach is quite useful.  
In this paper, we will present the guaranteed stability of the zeroth-order $(m=0)$ and first-order approximation $(m=1)$. We did not find
a simple proof of stability for higher order approximations, and we will resort to our numerical simulations to examine the stability property.

\begin{remark}
 We did not pursue a one-point Pad\'{e} approximation, or the Krylov subspace projections, which has been overwhelmingly successful in reduced-order modeling \cite{Bai2002}. The main reason is that in our case, a selected inversion of $(H_\iicii-is_0)$ can be done very efficiently, but the higher order inverse is very difficult. 
\end{remark}

\subsection{Zeroth order approximation.} For the zeroth order approximation, $R_{0,0}$ becomes a constant matrix, here denoted by $M$. We restrict $M$ to be a Hermitian matrix. The corresponding dynamics is
\begin{equation}
  \label{eq:dynamics0}
  \dot{\bm{\phi}}_\i(t)= -iH_\ici\bm{\phi}_\i(t) - iE^TM E \bm \phi_\i(t).
\end{equation}

\begin{theorem}[Stability condition of the zero order approximation]
  The dynamics (Eq. \eqref{eq:dynamics0}) is stable if $M$ is chosen as $K(s_0)$ for an  arbitrary positive value of $s_0$ ($s_0\ge 0$).
\end{theorem}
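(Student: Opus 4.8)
The plan is to use that for $m=0$ the reduced model collapses to the linear autonomous ODE \eqref{eq:dynamics0}, namely $\dot{\bm\phi}_\i=-i(H_\ici+E^TME)\bm\phi_\i$, so that ``stability'' for such a system is just boundedness of all trajectories. I would establish the stronger statement that the discrete $\ell^2$ norm is non-increasing, using the Lyapunov functional $\mathcal E(t):=\bm\phi_\i(t)^\dagger\bm\phi_\i(t)=\|\bm\phi_\i(t)\|^2$; the claim then reduces to showing $\dot{\mathcal E}(t)\le 0$ whenever $M=K(s_0)$ with $s_0\ge 0$.

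First I would differentiate $\mathcal E$ along \eqref{eq:dynamics0}, obtaining $\dot{\mathcal E}=2\,\mathrm{Re}\big(\bm\phi_\i^\dagger\dot{\bm\phi}_\i\big)=2\,\mathrm{Im}\big(\bm\phi_\i^\dagger(H_\ici+E^TME)\bm\phi_\i\big)$. Since $H_\ici$ is Hermitian in the linear case (and for a mean-field/nonlinear $H_\ici$ one only needs $\bm\phi_\i^\dagger H_\ici(\bm\phi_\i)\bm\phi_\i\in\mathbb R$, true for a real effective potential), the bulk term contributes nothing. Writing $\bm\phi_\Gamma=E\bm\phi_\i$ and splitting $M=M_H+iM_A$ into its Hermitian and anti-Hermitian parts, $M_H=\tfrac12(M+M^\dagger)$ and $M_A=\tfrac1{2i}(M-M^\dagger)$ (both Hermitian), this reduces the estimate to $\dot{\mathcal E}=2\,\bm\phi_\Gamma^\dagger M_A\,\bm\phi_\Gamma$. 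Hence it suffices to prove $M_A\preceq 0$.

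The core step is the negativity of the anti-Hermitian part of $K(s_0)$. Hermiticity of the global discrete Hamiltonian gives $H_{\iiicii}=H_{\iiciii}^\dagger$ and $H_\iicii=H_\iicii^\dagger$, so from \eqref{eq:17} one has $K(s_0)=-H_{\iiciii}^\dagger(H_\iicii-is_0I)^{-1}H_{\iiciii}$ and $K(s_0)^\dagger=-H_{\iiciii}^\dagger(H_\iicii+is_0I)^{-1}H_{\iiciii}$. The second resolvent identity gives $(H_\iicii-is_0I)^{-1}-(H_\iicii+is_0I)^{-1}=2is_0\,(H_\iicii-is_0I)^{-1}(H_\iicii+is_0I)^{-1}$, and since $H_\iicii+is_0I=(H_\iicii-is_0I)^\dagger$ the product on the right equals $BB^\dagger$ with $B:=(H_\iicii-is_0I)^{-1}$, hence is positive semidefinite. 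Therefore $M_A=\tfrac1{2i}\big(K(s_0)-K(s_0)^\dagger\big)=-s_0\,(B^\dagger H_{\iiciii})^\dagger(B^\dagger H_{\iiciii})\preceq 0$ for every $s_0\ge 0$, and we conclude $\dot{\mathcal E}(t)=-2s_0\,\|B^\dagger H_{\iiciii}\,\bm\phi_\Gamma(t)\|^2\le 0$, i.e. $\|\bm\phi_\i(t)\|\le\|\bm\phi_\i(0)\|$ and the dynamics is Lyapunov stable — in fact dissipative when $s_0>0$.

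I expect the only genuine obstacle to be conceptual: recognizing that stability is controlled by the sign of the anti-Hermitian part of the DtN value $K(s_0)$, and that this sign is fixed by $s_0$ through the resolvent identity — the algebraic form of the physical statement that a positive ``damping frequency'' makes the boundary operator dissipative. The remaining technical point is well-posedness of $K(s_0)$: $H_\iicii-is_0I$ is automatically invertible for $s_0>0$ since the Hermitian $H_\iicii$ has real spectrum, whereas the borderline $s_0=0$ needs $0\notin\sigma(H_\iicii)$ (which holds for the finite-difference free-space Hamiltonian), in which case $K(0)$ is Hermitian and the bound degenerates to exact norm conservation. Everything else — differentiating $\mathcal E$, the Hermitian/anti-Hermitian splitting, and the resolvent algebra — is routine.
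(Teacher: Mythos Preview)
Your proposal is correct and follows essentially the same approach as the paper: the same Lyapunov functional $\|\bm\phi_\i\|^2$, the same reduction to showing that the anti-Hermitian part of $K(s_0)$ is negative semidefinite, and the same conclusion. The only cosmetic difference is that the paper extracts the imaginary part via the factorization $(H_\iicii-is_0I)^{-1}=(H_\iicii^2+s_0^2I)^{-1}(H_\iicii+is_0I)$ while you use the resolvent identity, and you additionally discuss the borderline case $s_0=0$ more carefully than the paper does.
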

\begin{proof}
We define a Lyapunov functional as
\begin{equation}
  \label{eq:lyapunov0}
  W(t) = \bm \phi_{\i}^*(t)\bm \phi_\i(t).
\end{equation}

We examine the derivative of the Lyapunov functional, 
\begin{equation}
  \label{eq:8}
  \begin{aligned}
    \frac{d W}{d t} &= \frac{d }{d t}\bm \phi_\i^*\bm \phi_\i + \bm \phi_\i^*\frac{d }{d t}\bm \phi_\i\\
    & = i (\bm \phi_\i^* H_\ici + \bm \phi_\Gamma^*M^*E)\bm \phi_\i -i\bm \phi_\i^*(H_{\ici}\bm \phi_\i+E^TM\bm \phi_\Gamma)\\
    & = i\bm \phi_\Gamma^*(M^*-M)\bm \phi_\Gamma.
  \end{aligned}
\end{equation}
This indicates that if $M$ has a negative definite imaginary part, then $\frac{d W}{d t}\le 0$. To verify this,  we note that
since $-H_\icii[H_\iicii-is_0I]^{-1}H_\iici=-H_\icii(H_\iicii^2+s_0^2I)^{-1}(H_\iicii+is_0I)H_\iici$, $K(s_0)$ always has a negative imaginary part as long as $s_0$ is real and positive. Therefore, the dynamics \eqref{eq:dynamics0} is stable.
\end{proof}

This shows that the stability does not depend on the choice of the interpolation point.

\begin{remark}
The stability analysis also reveals  that the additional term in \eqref{eq:dynamics0} has an imaginary part that acts as a complex absorbing potential (CAP). However, unlike the commonly used CAP methods \cite{riss1996investigation,Riss_1993,MUGA2004357}, this term is derived from the DtN map, and it is dependent of the mesh size, finite difference  formulas, as well as the geometry of the domain. Further, the higher order approximations of the DtN map, which will be presented next,  provide a more general form that goes beyond the CAP methods.
\end{remark}

\subsection{First-order approximation.} For the first-order approximation, we use the rational function $R_{1,1}=(s I-B)^{-1}A$ to approximate the DtN map. The first-order approximated dynamics is
\begin{equation}
  \label{eq:dynamics1}
  \left\{
    \begin{aligned}
      \dot{\bm \phi_\i}(t) &= -iH_\ici\bm \phi_\i(t) - iE^T\bm f_\Gamma(t),\\
      \dot{\bm f}_\Gamma(t) &= B\bm f_\Gamma(t) + A E \bm \phi_\i(t).
    \end{aligned}
  \right.
\end{equation}
To verify the stability, we introduce a Lyapunov functional,
\begin{equation}
  \label{eq:lyapunov1}
  \begin{aligned}
    W(t) = \bm \phi_\i^*(t)\bm \phi_\i(t) + \bm f_\Gamma^*(t)Q\bm f_\Gamma(t),
  \end{aligned}
\end{equation}
where $Q$ is a positive definite matrix to be determined. The derivative of the above Lyapunov functional is
\begin{equation}
  \label{eq:11}
  \begin{aligned}
    \frac{d W}{d t} = & \frac{d}{d t}\bm \phi_\i^*\bm \phi_\i + \bm \phi_\i^*\frac{d}{d t}\bm \phi_\i + \frac{d }{d t}\bm f_\Gamma^*Q\bm f_\Gamma+ \bm f_\Gamma^* Q\frac{d}{d t}\bm f_\Gamma\\
    = &(i\bm \phi_\i^*H_\ici+i\bm f_\Gamma^*E)\bm \phi_\i + \bm \phi_\i^*(-iH_{\ici}\bm \phi_\i-iE^T\bm f_\Gamma) \\
    &+ (\bm f_\Gamma^* B^*+\bm \phi_\i^*E^TA^*)Q\bm f_\Gamma + \bm f_\Gamma^*Q(B\bm f_\Gamma + AE\bm \phi_\i)\\
    = &\bm f_\Gamma^*(iI+QA)E\bm \phi_\i + \bm \phi_\i^*E^T(-iI+A^*Q)\bm f_\Gamma + \bm f_\Gamma^*(B^*Q+QB)\bm f_\Gamma.
  \end{aligned}
\end{equation}
At this point, we observe that if $iI+QA = 0$, {\it and} $B^*Q+QB$ has negative definite real part, then the dynamics \eqref{eq:dynamics1} is stable. The following theorem provides a guideline for the choice of the interpolation points to ensure stability.

\begin{theorem}[Stability condition of the first order approximation]
  The dynamics (Eq. \eqref{eq:dynamics1}) is stable if the coefficients $A$ and $B$ are determined by the interpolations  $\lim_{\lambda \to 0} \frac{d}{d\lambda}R_{1,1}'=\lim_{\lambda \to 0} \frac{d}{d\lambda}K$, where $\lambda=s^{-1}$, and $R_{1,1}(s_1)=K(s_1)$, where $s_1$ is a any positive real number.
\end{theorem}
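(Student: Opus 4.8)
The plan is to exploit the observation already isolated in the derivative computation \eqref{eq:11}: the dynamics is stable provided we can find a positive definite $Q$ with $iI+QA=0$ and with $B^*Q+QB$ having negative definite real part. So the proof reduces to two tasks: (i) translate the two interpolation conditions in the theorem into explicit algebraic relations for $A$ and $B$ in terms of $K$ and its behavior at $s=0$ and $s=s_1$; and (ii) show that those relations force the existence of the required $Q$. I would begin by unpacking the interpolation at $\lambda = s^{-1} \to 0$. Writing $R_{1,1}(s) = (sI-B)^{-1}A$ in the variable $\lambda$ gives $R_{1,1} = \lambda(I-\lambda B)^{-1}A = \lambda A + \lambda^2 BA + \cdots$, so matching the first $\lambda$-derivative at $\lambda=0$ of $R_{1,1}$ (note $R_{1,1}(0)=0$, consistent with $K(s)\to 0$ as $s\to\infty$ from \eqref{eq:17}) pins down $A$ as the first moment of $K$ in the $\lambda$-expansion, i.e. $A = \lim_{s\to\infty} sK(s)$. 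The condition $R_{1,1}(s_1)=K(s_1)$ then determines $B$ via $(s_1 I - B)^{-1}A = K(s_1)$, equivalently $B = s_1 I - A K(s_1)^{-1}$ (assuming $K(s_1)$ invertible).

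The key structural step is to identify $A$ and to build $Q$. From \eqref{eq:17}, $sK(s) = -H_{\iiicii}\,[\,H_\iicii/s - iI\,]^{-1}H_{\iiciii}$, so $A = \lim_{s\to\infty} sK(s) = -H_{\iiicii}(-iI)^{-1}H_{\iiciii} = -i\,H_{\iiicii}H_{\iiciii} = -i\,H_{\iiicii}H_{\iiciii}^\dagger$-type product; since $H$ is Hermitian, $H_{\iiciii} = H_{\iiicii}^*$, hence $A = -i\,H_{\iiicii}H_{\iiicii}^*$, which is $-i$ times a positive semidefinite Hermitian matrix. Assuming $H_{\iiicii}H_{\iiicii}^*$ is in fact positive definite (the boundary nodes genuinely couple to the exterior), set $Q = (H_{\iiicii}H_{\iiicii}^*)^{-1}$, which is positive definite. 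Then $QA = (H_{\iiicii}H_{\iiicii}^*)^{-1}(-i\,H_{\iiicii}H_{\iiicii}^*) = -iI$, so $iI+QA=0$ is satisfied exactly — this is the whole point of the first interpolation condition.

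It remains to check the real part of $B^*Q+QB$ is negative definite, and this is where the second interpolation condition ($R_{1,1}(s_1)=K(s_1)$ at real positive $s_1$) enters. Using $B = s_1 I - A K(s_1)^{-1}$ and $Q = -A^{-1}\cdot(iI)^{-1}\cdot$... — more directly, from $QA=-iI$ we get $QB = s_1 Q - QA K(s_1)^{-1} = s_1 Q + iK(s_1)^{-1}$. Therefore
\begin{equation}
  B^*Q + QB = 2s_1 Q + i\bigl(K(s_1)^{-1} - (K(s_1)^{-1})^*\bigr) = 2s_1 Q - 2\,\mathrm{Im}\,K(s_1)^{-1}.
\end{equation}
Now I invoke the computation already used in the zeroth-order proof: $K(s_1) = -H_{\iiicii}(H_\iicii^2 + s_1^2 I)^{-1}(H_\iicii + is_1 I)H_{\iiciii}$ has negative definite imaginary part for real $s_1>0$, and a standard fact is that $\mathrm{Im}(Z^{-1}) = -Z^{-*}(\mathrm{Im}\,Z)Z^{-1}$, so $\mathrm{Im}\,K(s_1)^{-1}$ is also negative definite. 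Hence $-2\,\mathrm{Im}\,K(s_1)^{-1}$ is positive definite and $2s_1 Q \succeq 0$, so the sum is positive definite — but I need it \emph{negative} definite, which signals I have a sign convention to chase carefully (the kernel $K$ may enter \eqref{eq:dynamics1} with an orientation making $B$ "stable" rather than "anti-stable"); I expect the correct reading to give $\mathrm{Re}(B^*Q+QB) = -2s_1 Q + 2\,\mathrm{Im}\,K(s_1)^{-1} \prec 0$.

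The main obstacle I anticipate is precisely this sign/orientation bookkeeping together with the two genericity assumptions — that $K(s_1)$ is invertible and that $H_{\iiicii}H_{\iiicii}^*$ is positive definite (equivalently, that the chosen $A$ is invertible so $Q$ is well-defined). The first should follow because $\widetilde H_\iicii(s_1) = H_\iicii - is_1 I$ is invertible for real $s_1$ (its spectrum avoids the real axis away from... actually $H_\iicii$ Hermitian means $H_\iicii - is_1 I$ is invertible for all real $s_1 \ne 0$), and $K(s_1)$ inherits invertibility if $H_{\iiicii}$ has full row rank; the second is the same full-rank condition on the exterior coupling block. I would state these as a mild nondegeneracy hypothesis on the finite-difference stencil at $\Gamma$. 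Once the signs are fixed and these hypotheses are in place, the Lyapunov argument closes: $dW/dt = \bm f_\Gamma^*(B^*Q+QB)\bm f_\Gamma \le 0$, giving boundedness of $\bm\phi_\i$ and $\bm f_\Gamma$, hence stability of \eqref{eq:dynamics1}.
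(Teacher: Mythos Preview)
Your setup matches the paper's exactly: the identification $A=-iH_{\iiicii}H_{\iiciii}$ from the $\lambda\to 0$ moment, the choice $Q=(H_{\iiicii}H_{\iiciii})^{-1}$ giving $QA=-iI$, and the formula $B=s_1I-AK(s_1)^{-1}$ from the second interpolation condition are all correct and identical to the paper's route. Your computation
\[
  B^*Q+QB \;=\; 2s_1 Q + i\bigl(K_1^{-1}-(K_1^*)^{-1}\bigr)
\]
(with $K_1:=K(s_1)$) is also right.

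The genuine gap is what comes next. You observe that $2s_1 Q\succ 0$ while $i(K_1^{-1}-K_1^{-*})=-2\,\mathrm{Im}(K_1^{-1})\prec 0$, and you then conjecture a sign slip. There is no sign slip: the two terms really do have opposite signs, and the content of the proof is precisely to show that the negative term \emph{dominates}. This cannot be done with the soft input ``$\mathrm{Im}\,K_1\prec 0$'' alone; it requires the explicit structure of $K_1$. The paper writes $P_1=(H_\iicii-is_1I)^{-1}H_{\iiciii}$ so that $K_1=-H_{\iiicii}P_1$ and computes $K_1^*-K_1=2is_1 P_1^*P_1$. Substituting and factoring through $K_1^{-1}$ gives
\[
  B^*Q+QB \;=\; 2s_1\,(K_1^*)^{-1}\Bigl(K_1^*QK_1 - P_1^*P_1\Bigr)K_1^{-1}
  \;=\; 2s_1\,(K_1^*)^{-1}P_1^*\Bigl(\Pi - I\Bigr)P_1K_1^{-1},
\]
where $\Pi = H_{\iiciii}(H_{\iiicii}H_{\iiciii})^{-1}H_{\iiicii}$ is the orthogonal projection onto $\mathrm{range}(H_{\iiciii})$. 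Since $\Pi - I\preceq 0$, one gets $B^*Q+QB\preceq 0$ (semidefinite, not strict, which is all that is needed for $dW/dt\le 0$). Without this projection identity your argument does not close; ``chasing a sign convention'' will not produce it.
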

\begin{proof}
  From $$\lim_{\lambda\to 0}\frac{d}{d\lambda} R_{1,1} = \lim_{\lambda \to 0} \frac{d}{d\lambda}K ,$$ we have $$A = -iH_{\iiicii}H_{\iiciii}.$$
  
  Now we pick $Q=(H_{\iiicii}H_{\iiciii})^{-1}$. $Q$ is symmetric positive definite. The Lyapunov functional (Eq. \eqref{eq:lyapunov1}) is thus positive definite. Let us denote $K_1=K(s_1)$. When $R_{1,1}(s_1)=K(s_1)$, we have, $$B = s_1I - AK_1^{-1}.$$ Hence, $QB=-is_1A^{-1}+iK_1^{-1}$. Next, we show that $QB$ has a negative definite imaginary part. We start with
  \begin{equation}
    \begin{aligned}
      QB+B^*Q &= 2s_1(H_{\iiicii}H_{\iiciii})^{-1}+i(K_1^{-1}-(K_1^*)^{-1}),\\
      &=2s_1(H_{\iiicii}H_{\iiciii})^{-1}+i(K_1^*)^{-1}(K_1^*-K_1)K_1^{-1}.
    \end{aligned}
  \end{equation}
To continue,  we denote $P_1 = (H_\iicii-is_1I)^{-1}H_{\iiciii}$. As a result, $K_1=-H_{\iiciii} P_1$, and
  \begin{equation}
    \begin{aligned}
      K_1^*-K_1 &= -H_{\iiicii}(H_\iicii+is_1I)^{-1}H_{\iiciii}+H_{\iiicii}(H_\iicii-is_1I)^{-1}H_{\iiciii},\\
      & = 2is_1P_1^*P_1.
    \end{aligned}
  \end{equation}
  Therefore we have,
  \begin{equation}
    \label{eq:35}
    \begin{aligned}
      QB + B^*Q &= 2s_1(K_1^*)^{-1}(K_1^*(H_{\iiicii}H_{\iiciii})^{-1}K_1-P_1^*P_1)K_1^{-1}\\
      &=2s_1(K_1^*)^{-1}(P_1^*H_{\iiicii}(H_{\iiicii}H_{\iiciii})^{-1}H_{\iiciii}P_1-P_1^*P_1)K_1^{-1},\\
      &=2s_1(K_1^*)^{-1}P_1^*(H_{\iiciii}(H_{\iiicii}H_{\iiciii})^{-1}H_{\iiicii}-I)P_1 K_1^{-1}.
    \end{aligned}
  \end{equation}
 Since $H_{\iiciii}(H_{\iiicii}H_{\iiciii})^{-1}H_{\iiicii}$ is an orthogonal projection matrix, $H_{\iiciii}(H_{\iiicii}H_{\iiciii})^{-1}H_{\iiicii}-I$ is symmetric negative semi-definite. As a result, $QB+B^*Q$ is negative semi-definite. Therefore, the dynamics (Eq. \eqref{eq:dynamics1}) is stable.
\end{proof}

The interpolation scheme suggested in the theorem involves an interpolation point $s_1>0$ and another point toward $s_2\to +\infty$. Our numerical tests indicate that the approximation is still stable when $s_2$ is finite.

\subsection{Second-order approximation}
The rational function approximation can be extended to higher order. We briefly describe the second-order approximation here, in which case the DtN map is approximated by $R_{2,2}=(s^2I-sB_1-B_0)^{-1}(sA_1+A_0)$, and the dynamics augmented with this ABC is given by,
\begin{equation}
  \label{eq:dynamics2}
  \left\{
    \begin{aligned}
      \dot{\bm \phi}_\i(t) &= -iH_\ici\bm \phi_\i(t) - iE^T\bm f_\Gamma(t),\\
      \ddot{\bm f}_\Gamma(t) &= B_1 \dot {\bm f_\Gamma}(t) + B_0\bm f_\Gamma(t) + A_1 E \dot{\bm \phi}_\i(t) + A_0 E \bm \phi_\i(t).
    \end{aligned}
  \right.
\end{equation}
In the above equation, $\ddot{}$ denotes the second derivative with respect to the time. We have not found a simple proof that provides the stability condition of the dynamics \eqref{eq:dynamics2} in terms of the interpolation points. The  stability will thus be demonstrated  by numerical tests in Section \ref{sec:4}.

\section{Applications \& numerical experiments}
\label{sec:4}
In this section, we test the ABCs with three examples.
For each model, the  details regarding the numerical tests are respectively discussed in section \ref{sec:tdse1d-s}, section \ref{sec:tdse3d} and section \ref{sec:tdhf}. The three problems are briefly summarized as follows:
\begin{itemize}
\item A 1D time-dependent Schr\"odinger equation extensively used as a test example in the literature \cite{antoine2008review}, although our main emphasis is on Schr\"odinger equation in $\mathbb R^3$. 
The system is a 1D free electron,
  \begin{equation}
    \label{eq:tdse1d}
    i\frac{\partial }{\partial t}\psi(x, t) = \wh{H}\psi(x, t), \wh H=-\frac{\partial^2_x}2 \text{ in } \mathbb R,
  \end{equation}
  with the initial condition $\psi^0(x) = \exp(-(x-x_c)^2+ik_0(x-x_c))$.
\item A 3D time-dependent Schr\"odinger equation considered in \cite{antoine2004}:
  \begin{equation}
    \label{eq:tdse3d}
    i\frac{\partial }{\partial t}\psi(\bm x, t) = \wh{H}\psi(\bm x, t),  \wh H=-\frac{\nabla^2}2 \text{ in } \mathbb R^3,
  \end{equation}
  with the initial condition $ \psi^0(x)=\exp(-x_1^2-x_2^2-x_3^2+ik_0x_1)$
\item A 3D time-dependent Hartree-Fock model \cite{flocard1978three}:
  \begin{equation}
    \label{eq:tdhf3d}
    i\frac{\partial }{\partial t}\varphi_j(\bm x, t) = \wh{H}\varphi_j(\bm x, t) \text{ in } \mathbb R^3, \text { for } j=1,\dots, A
  \end{equation}
  with the initial condition $\varphi_j^0$ determined from the ground state. The form of $\wh H$ is given by \eqref{eq:ham_tdhf} later in this section. The 3D TDHF model is a system of nonlinear 3D time-dependent Schr\"odinger equations. The Hamiltonian $\hat H$ depends on the one-particle wave functions.
\end{itemize}

{\bf Integrators.} In general, numerical integrators can be formulated as \cite{castro2004propagators}
\begin{equation}
  \label{eq:16}
  \phi^{(n+1)}=U\phi^{(n)},
\end{equation}
where $U$ is the operator that mimics the time evolution operator. 
For linear problems with time-independent potential,  the exact operator is a matrix exponential,
\begin{equation}
  \label{eq:18}
  U_E(t,t')=\exp(-i\Delta t H(t')).
\end{equation}

One widely used method is the Crank-Nicholson scheme,
\begin{equation}
  \label{eq:20}
  U_{CN} = (1+i\Delta t/2H)^{-1}(1-i\Delta t/2H).
\end{equation}

For the 3D case, it is often impractical to perform the matrix inversion in the Crank-Nicholson scheme. In TDDFT \cite{castro2004propagators,pueyo2018,martin2004},  one classical method is the Taylor expansion of the exact integrator,
\begin{equation}
  \label{eq:21}
  U_{5}= I - iH\Delta t - \frac{1}{2}H^2(\Delta t)^2 + \frac{1}{6}H^3(\Delta t)^3- i\frac{1}{24}H^4 (\Delta t)^4.
\end{equation}
Clearly, the operator $U_5$ is not unitary. However, we will choose $\Delta t$ to be sufficiently small, in which case this integrator is stable and accurate \cite{marques2003}.
This allows us to focus more on the performance of various ABCs.

\subsection{The 1D time-dependent Schr\"odinger equation}
\label{sec:tdse1d-s}
In the first test, we look at a 1D quantum system. The setting of the problem is illustrated in FIG. \ref{fig:1dse_illustration}.
\begin{figure}[h]
  \centering
  \includegraphics[scale=0.4]{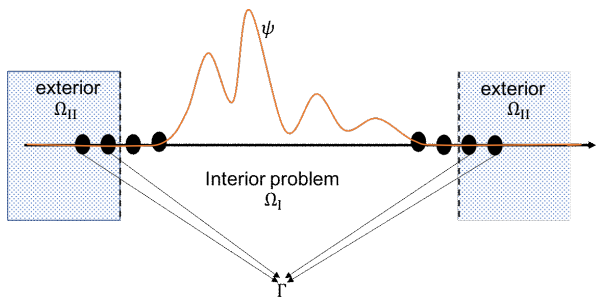}
  \caption{An illustration of the model reduction for one dimensional Schr\"{o}dinger equation. $\psi$ is the wave function, initially supported in  the computational domain $\Omega_\i$.}
  \label{fig:1dse_illustration}
\end{figure}

The analytical solution of Eq. \eqref{eq:tdse1d} can be explicitly written as
\begin{equation}
  \label{eq:14}
  \psi^{\text{ex}}(x, t) = \sqrt{\frac{i}{i-2t}}\exp\left(\frac{-k_0(x-x_c)+k_0^2t-i(x-x_c)^2}{i-2t}\right),
\end{equation}
assuming the initial condition
\begin{equation}
  \label{eq:23}
  \psi^0(x) = \exp(ik_0(x-x_c)-(x-x_c)^2).
\end{equation}
The initial condition $\psi^0$ is localized around $x_c$, which is the center of the wave packet. $k_0$ is the wavenumber. In this test, we set $k_0=5$ and $x_c=-6$. The exact solution, $\psi^{\text{ex}}$, propagates to the right when $k_0>0$. Therefore, we only need to implement an ABC on the right boundary.  A Dirichlet boundary condition will be imposed on the left.

In our simulations, we pick the interior region  to be $\Omega_\i=[-12,3]$ and the exterior domain is $\Omega_\ii=(\infty, -12)\cup(3, \infty)$. The Laplacian operator is discretized by the five-point scheme with grid spacing of $h=0.01$.

The evaluation of the DtN map is discussed in our previous work \cite{Wu2018} using the discrete Green's function. The details of 1D lattice Green's function will be discussed in  the Appendix \ref{sec:gfun1d}.  We select $s=20$ for the zeroth-order approximation, two points $s=10$, and $20$ for the first-order approximation, and four interpolation points, $s=10$, $11$, $20$, $21$ for the second-order approximation. 
The zeroth order approximation corresponds to a complex absorbing potential  over two grid points at the boundary. But we point out that in practice, the latter method can be applied to a much larger buffer region. In accordance with the width of the finite-difference stencil, $4$ and $8$ extra variables are introduced in the first-order and second-order approximations, respectively.

The solution computed with the Dirichlet boundary condition is completely reflected back into the interior region when the wave function propagates to the boundary (FIG. \ref{fig:interp_sol}). 
The zeroth-order approximation causes some reflection when the wave packet first arrives at the boundary, but most of the reflection is eliminated eventually.
The first-order ABC qualitatively captures the transient profile of the exact solution, with some  errors when the wave reaches the boundary. The second-order ABC provides a much more accurate solution. The solution with ABCs is systematically improved as the order of the approximation increases.
\begin{figure}[!htp]
  \centering
  \includegraphics[scale=0.9]{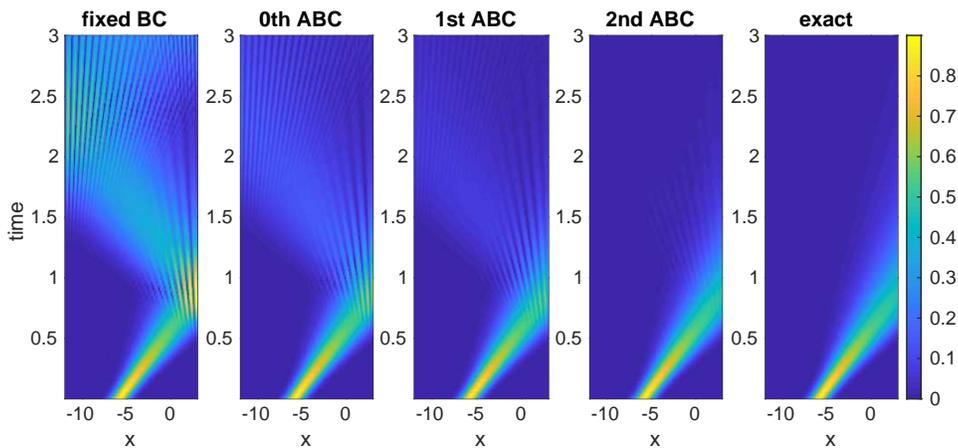}
  \caption{The solutions computed using fixed boundary condition, zeroth-order, first-order and second-order ABCs compared with the exact solution.}
  \label{fig:interp_sol}
\end{figure}
In Fig. \ref{fig:interp_dens}, the total electron number ($L_2$-norm of the wave function in the computational domain) is presented. Before $t=0.5$, the electron number in four cases is the same. The norm of the wave function does not decay with the fixed boundary condition since all the wave function are reflected. The electron number by the first order ABC decays slower than the exact one. In the second order approximation, The maximum error in the $L_2$-norm over time is less than $3\times 10^{-3}$.

In the proposed ABCs, one can either improve the order of ABCs or optimize the interpolation points. In most absorbing techniques, the adjustable parameters are the width of the buffer region or the absorbing strength. It is generally difficult to quantitatively compare the absorbing properties of these methods. As a qualitative comparison, we implemented the following CAP \cite{Riss_1993}
\begin{equation}
  \label{eq:9}
  W(x) =
  \left\{
      \begin{aligned}
        &(x + 16)^2, & -16 < x < -12;\\
        &(x - 7)^2, & 3 < x < 7;\\
        &0, & otherwise
      \end{aligned}
    \right.
\end{equation}
in the 1D model. The size of the buffer region is fixed in our simulations. This leads to the effective Hamiltonian $H^\text{eff}(\eta)=H-i\eta W$ on the interval $[-16, 7]$ where $\eta$ is the CAP strength. The CAP varies from $0.01$ to $10$ (Fig. \ref{fig:interp_dens}). When $\eta$ is smaller than $0.1$, we still observed the reflection from the buffer region. When $\eta=1$, we observed the decent accuracy ($98\%$). As a comparison, this is also achieved by the proposed first-order approximation without optimizing the interpolation conditions, and without introducing a buffer region.  
\begin{figure}[!htp]
  \centering
  \includegraphics[scale=0.8]{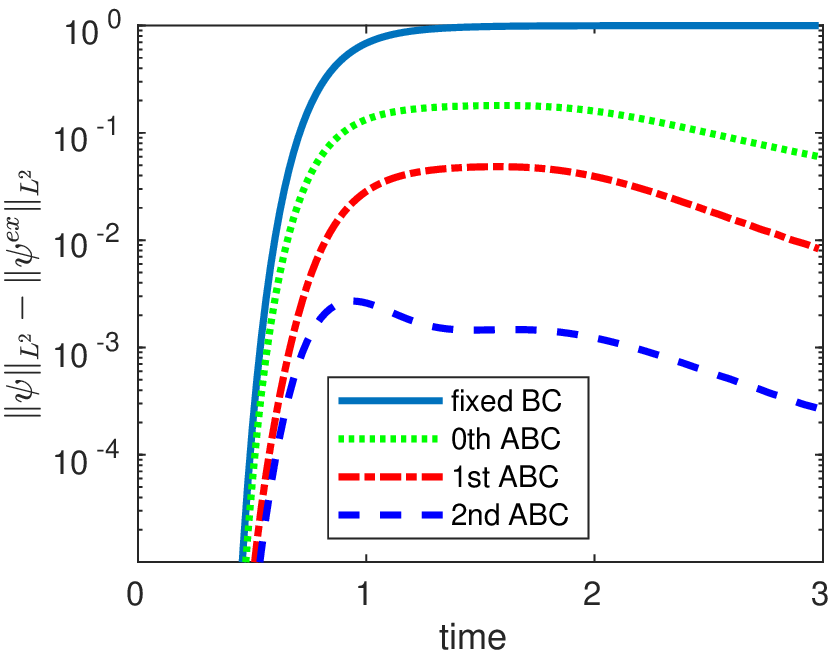}
  \includegraphics[scale=0.8]{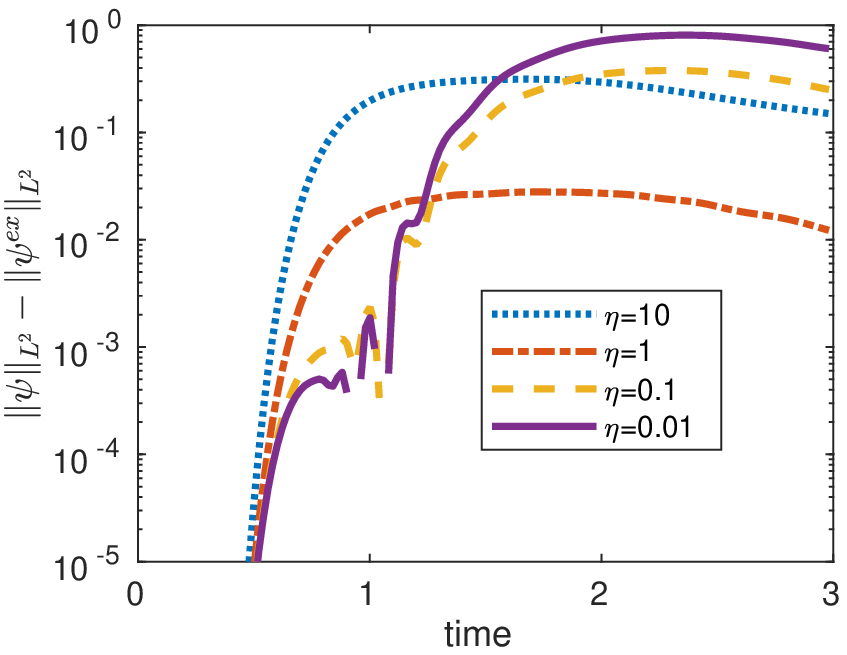}
  \caption{Comparison of the total electron density in $\Omega_\i$ with different ABCs (left). Comparison of the total electron density in $\Omega_\i$ with different complex absorbing potential strength (right)}
  \label{fig:interp_dens}
\end{figure}

\subsection{A 3D time-dependent Schr\"odinger equation}
\label{sec:tdse3d}
In this section, we will test the absorbing boundary conditions for the 3D Schr\"{o}dinger equation for a free electron. We restrict the computational domain in a box $[-1.5, 1.5]\times[-1.5, 1.5]\times[-1.5, 1.5]$. The 3D Laplace operator is approximated by the 7-point finite difference scheme in each direction with uniform grid spacing of $h=0.1$. Consequently, there are $31$ interior points and $6$ exterior points in each axis direction. For each interpolation point, the DtN map $K(t)$ is a $14166\times 14166$ dense matrix. The coefficients of the first-order ABC are computed by interpolation of $s=1,2$. Four interpolation points $s=1,2,3,10$ are used for the second-order ABC.

Similar to the 1D case, we still can construct the analytical solution of Eq. \eqref{eq:tdse3d},
\begin{equation}
  \label{eq:15}
  \psi^{ex} = \left(\frac{i}{i-2t}\right)^{\frac{3}{2}}\exp\left(\frac{-i(x_1^2+x_2^2+x_3^2)- k_0x_1+\frac{1}{2}k_0^2t}{i-2t}\right)
\end{equation}
with $k_0=5$. One should notice that the difference between the analytical solution and the exact solution of the discrete model might not be small due to the large grid spacing. Therefore, we will only use the analytical solution as a reference for qualitative comparisons.

For the time integration, the step size is chosen as $\Delta t = 0.001$. In the first test, we observe how the total electron density in $\Omega_\i$ changes in time (Fig. \ref{fig:se3d}).  The number of electrons is almost a constant when we fix the wave function at the boundary (Dirichlet boundary condition). If we impose the ABC on the system, the number of electrons will decrease after the wave function propagates to the boundary. With the first-order ABC, about $20\%$ of the wave function in magnitude is reflected. Much more reflections are reduced by the second-order ABC. Over a longer time period, such reflections occur multiple times for both the first-order ABC and the second-order ABC. After that, almost all the electrons are emitted out of the box. 
\begin{figure}[!htp]
  \centering
  \includegraphics[scale=0.8]{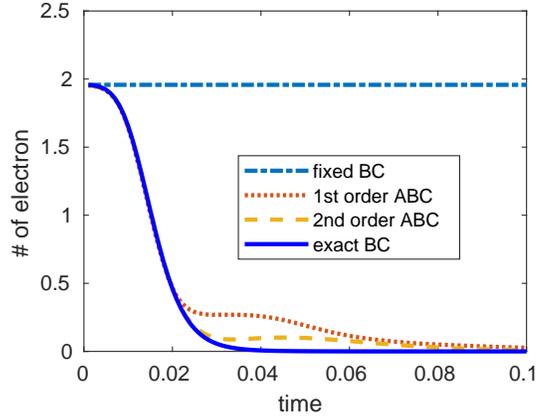}
  \caption{The number of electrons as a function of time.}
  \label{fig:se3d}
\end{figure}

The top view of surface plots of solutions of 3D time-dependent Schr\"odinger equation with different boundary conditions are shown in Fig. \ref{fig:se3d_sol}. The fixed boundary condition leads to significant reflections. From $t=0.011$, it does not provide any significant results. These errors are reduced by the absorbing boundary conditions. Some reflections are still observed in the first-order approximation. The reflections in the second-order ABC are almost negligible.  This experiment also shows that the proposed ABCs is not sensitive to the presence of  corners and edges along the boundary. Even though the wave function propagates to the corners, no significant reflection is observed around the corners. 
\begin{figure}[!htp]
  \centering
  \includegraphics[scale=0.9]{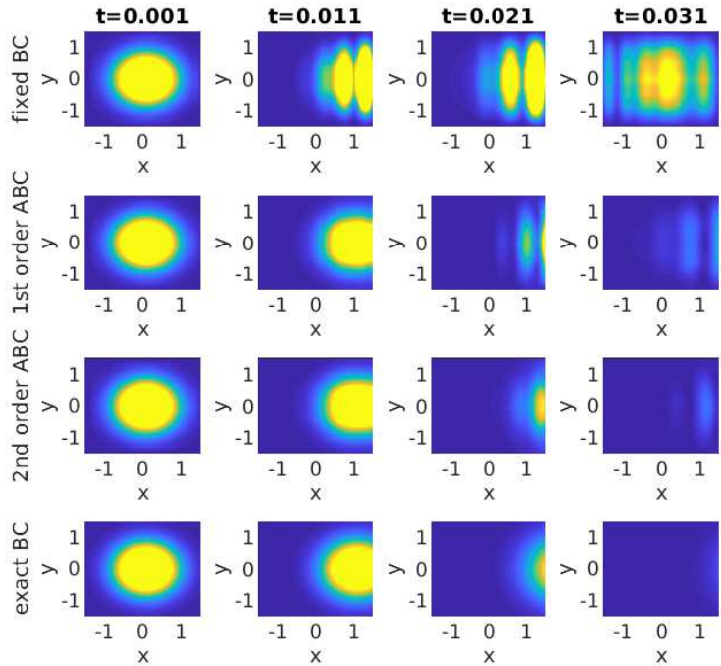}
  \caption[3d TDSE solutions.]{Projection of the 3D electron density on x-y plane. Time-dependent Schr\"odinger equation with the fixed boundary condition (first row), the first order ABC (second row), the second order ABC (third row), and the exact solution (last row). The color indicated the electron density.}
  \label{fig:se3d_sol}
\end{figure}

\subsection{3D time-dependent Hartree-Fock model (TDHF) with localized interactions}
\label{sec:tdhf}
The Hartree-Fock equation for a nucleon system can be formulated from the many-body system by approximating the many-body wave function with the Slater determinant, and applying the variational principle of the Skyrme functional \cite{flocard1978three,Skyrme1977,maruhn1977time} with respect to the wave function. The direct procedure yields the coupled TDHF equations with localized interactions.
\begin{equation}
  \label{eq:tdhf}
  i\hbar \frac{\partial }{\partial t}{\varphi}_j(\bm r, t) = H(t, \rho)\varphi_j(\bm r, t), \quad\quad j=1, \dots, A,
\end{equation}
where $H$ is the time-dependent one-body HF Hamiltonian. The one-body Hamiltonian depends on the nucleon density, given by
\begin{equation}
  \rho(\bm r,t) = \sum_{j=1}^A|\varphi_j(\bm r,t)|^2.
\end{equation} The one-body HF Hamiltonian can be explicitly written as
\begin{equation}
  \label{eq:ham_tdhf}
  H =-\frac{\hbar^2}{2m}\Delta +\frac{3}{4}t_0\rho + \frac{3}{16}t_3\rho^2+W_y+W_C.
\end{equation}
Here $t_0$ and $t_3$ are the coefficients of the Skyrme interactions \cite{flocard1978three,Skyrme1977,maruhn1977time}. 
Among the five terms in the one-body Hamiltonian, the first term is from the kinetic energy. The following two terms are the expectation value of the zero-range density-dependent two-body effective interaction. Furthermore, $W_y$ is the Yukawa potential
\begin{equation}
  W_y(\bm r) = V_0\int d\bm r\frac{\exp{-|\bm r -\bm r'|/a}}{|\bm r -\bm r'|/a} \rho(\bm r'),
\end{equation}
and $W_C$ is the Coulomb potential given by
\begin{equation}
  W_C(\bm r) = e^2\int d\bm r'\frac{1}{|\bm r-\bm r'|}\rho_p(\bm r'),
\end{equation}
where $V_0$ and $a$ are the coefficients of Yukawa interactions. $\rho_p$ is the proton density. In practice, the Yukawa and Coulomb potentials are calculated by solving the corresponding Poisson and Helmholtz problems, respectively,
\begin{equation}
  \left\{
    \begin{aligned}
      \nabla^2 W_c &= -2\pi e^2\rho,\\
      (\nabla^2-1/a^2)W_Y &= -4\pi V_0 a\rho.
    \end{aligned}
  \right.
\end{equation}

We  consider the above TDHF model (\ref{eq:tdhf3d}) in the infinite region $\mathbb R^3$. In practice, the computational domain $\Omega_\i$ (usually a box) is only a small part of the entire $\Omega$. We make the further assumption that
\begin{equation}
  \psi_j = 0  \text{ for } i = 1, \dots, A \text{ and } \rho = 0  \text{ in } \Omega_\ii \text{, at } t = 0.
\end{equation}

Our approach starts from the discrete model. We denote $\bm \varphi_\i$ is the discretization of the wave function $\bm \varphi(\bm x)$ in $\Omega_\i$. $H_\iicii$ is the discretization of the operator $-\frac{\hbar^2}{2m}\nabla^2$. $H_\ici$ is still a nonlinear operator containing the Coulomb potential. The notations of boundaries and DtN map follow those in Section \ref{sec:2}. In this model, we only assume $\rho = 0$ in $\Omega_\ii$ at $t=0$.

As an application, we study the nuclear reaction of the $^{16}\text{O}+^{16}\text{O}$ system in the infinite space $\mathbb R^3$. The setup and data of the numerical experiments are mainly from \cite{flocard1978three}. In this model, we assume the perfect spin-isospin degeneracy for each particle, so that each spatial orbital is occupied by four nucleons. There are $32$ nucleons in total in this system. The two particles at ground state are positioned in a box away from each other and away from the boundary. The ground state is achieved by solving the static Hartree-Fock equation self-consistently in $\Omega_\i$. We assume that there is no interaction between the two particles at the initial state. The Poisson and Helmholtz problems are solved by preconditioned conjugate gradient method using the same discretization method as the wave functions.

The initial condition is given by multiplying each orbital by the phase $e^{i\bm k\cdot \bm r}$ to create a head-on collision. Here, $\bm k$ should be carefully selected to ensure the particles enter into the fusion window. Namely, two particles pass the Coulomb barrier and are trapped by the nuclear force. Otherwise, the two particles move to the boundaries and the nuclear fusion will not occur. We use the same integrator as the case for three dimensional Schr\"{o}dinger equation. The time step is set to be $0.001 fm/c$. In each time step, we also need to perform the self-consistent iteration due to the nonlinearity \cite{pueyo2018,castro2004propagators}. 

In the numerical experiment, $\Omega_\i$ is discretized with grid spacing of $1$ fm. The Laplacian operator is approximated by the 7-point scheme in each spatial direction. The region of interest is $[-15 fm, 15 fm]^3$. We employ the solution of a larger system ($[-30 fm, 30 fm]^3$) as the exact solution to examine the ABCs. The exact number of nucleons and the exact total energy are computed from the larger system restricted to the small region. The initial conditions of the two systems are the nucleon density at the ground state of the smaller system. One should notice that the ground state of the smaller system is not necessarily the ground state of the larger system. The initial condition of the larger system is not the ground state of itself. This will lead a small truncation error due to the long-range interaction in the system. 

The energy conservation and mass conservation (FIG. \ref{fig:tdhf_nucleon_energy}) for the standard TDHF with Dirichlet boundary conditions are easily observed  in our simulations. The system with ABCs released over $1$ MeV from the total energy, and emitted $0.1$ nucleons in the simulating period. More energy and nucleons are expected to be emitted over a longer period. 

Our main focus here is to test how the absorbing property is influenced by  the interpolation points. We present results from the following six cases: (A): $s_1=10^{-1}, s_2=2\times 10^{-1}$; (B): $s_1=10^{0}, s_2=2\times 10^{0}$; (C): $s_1=10^{1}, s_2=2\times 10^{1}$; (D): $s_1=10^{2}, s_2=2\times 10^{2}$; (E): $s_1=10^{-2}, s_2=2\times 10^{-2}$; (F): fixed boundary condition.

The case when $s_1=1$ and $s_2=2$ provides the best agreement with the exact solution in terms of both the number of nucleons and total energy. The case $s_1=0.1$ and $s_2=0.2$ follows second. When the order of magnitudes of $s_1$ and $s_2$ are much larger or smaller than 1, the absorbing properties are much worse. The optimal $s_1$ and $s_2$ should be on the order of 1. From FIG \ref{fig:tdhf_nucleon_energy}, the absorbing property is not sensitive to the selection of $s_1$ and $s_2$, as long as we choose them in the interval $[0.1, 1]$. 
\begin{figure}[!htp]
  \centering
  \includegraphics[scale=0.75]{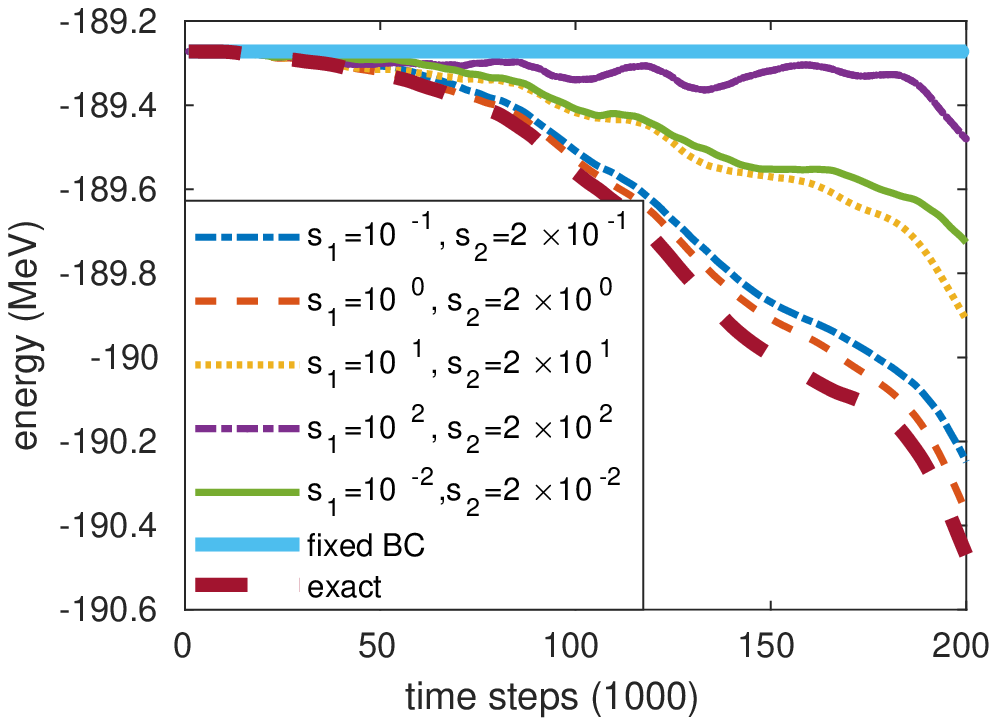}
  \includegraphics[scale=0.75]{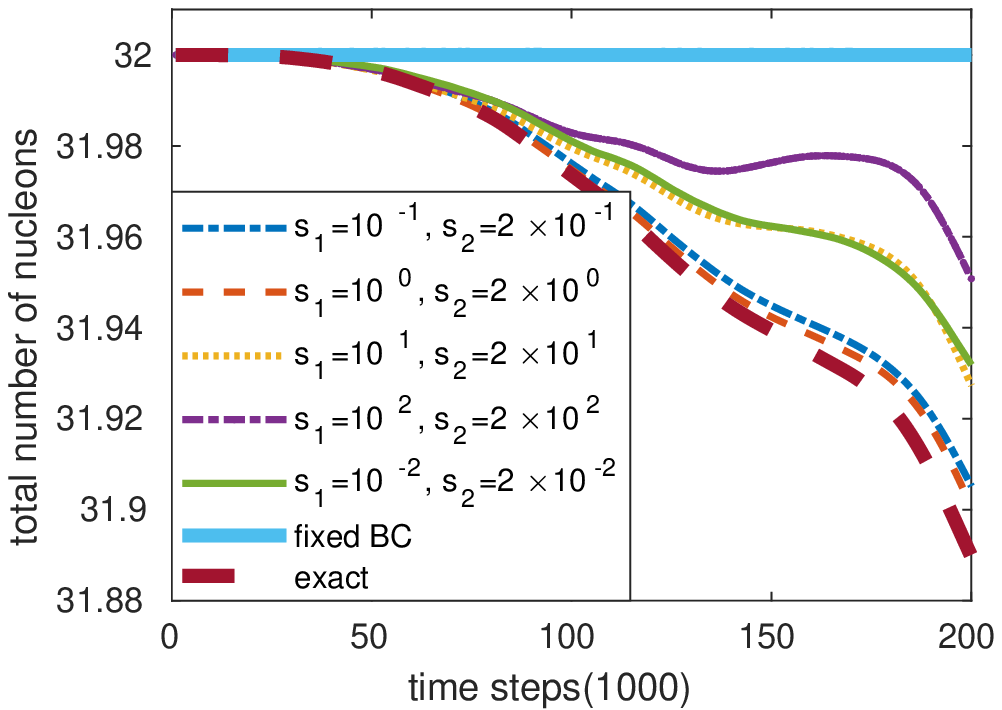}
  \caption{The loss of nucleons and total energy}
  \label{fig:tdhf_nucleon_energy}
\end{figure}

Further, we compare the profile of the densities with the exact solution. Results are displayed in FIG \ref{fig:tdhf_rho}. In the first 5000 time steps, the error of different cases are almost the same. However, when the wave function reaches the boundaries, the Dirichlet boundary condition generates massive errors as expected. The case (A) and (B) still shows great agreement with the exact density. The case (C), (D), and (E) also improved the accuracy of density in varying degrees. 
\begin{figure}[!htp]
  \centering
  \includegraphics[scale=0.9]{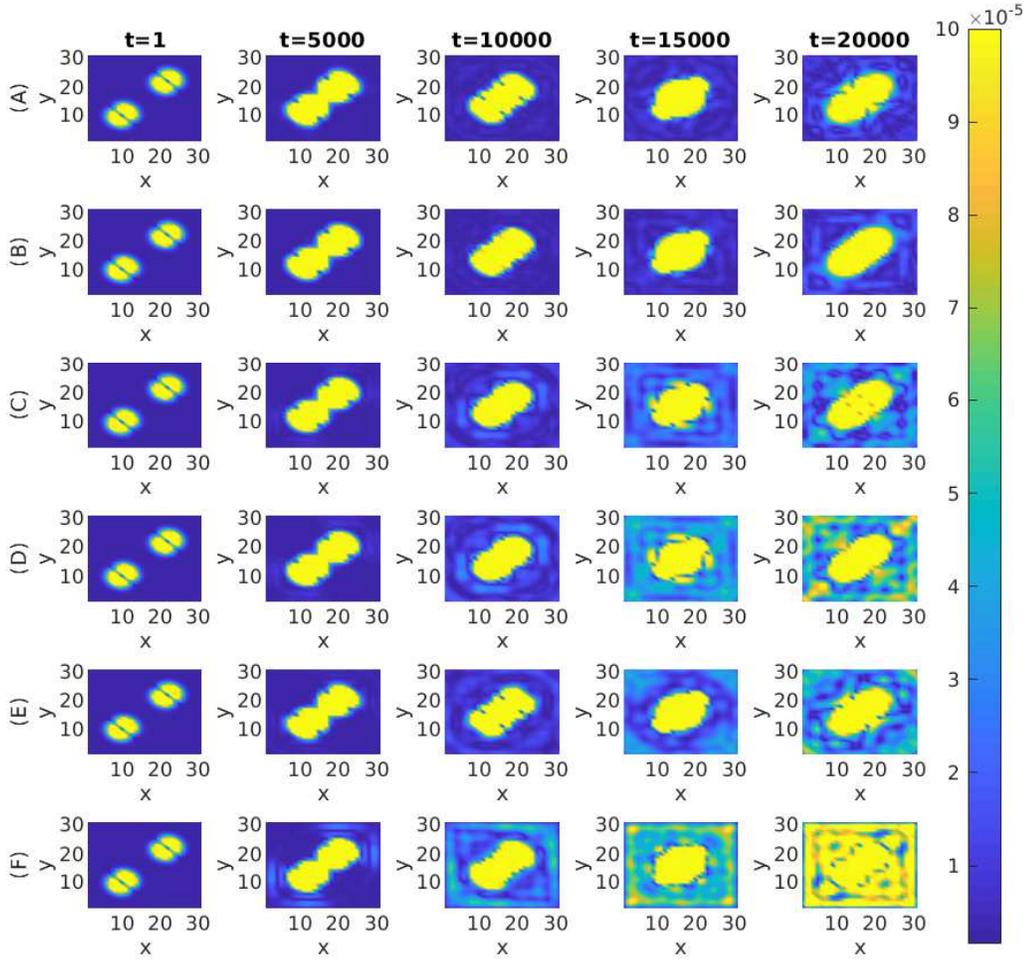}
  \caption{Error of the nucleon density in time evolution of $^{16}\text{O}+^{16}\text{O}$ collision, projected onto the $x-y$ plane. From the top to the bottom, (A): $s_1=10^{-1}, s_2=2\times 10^{-1}$; (B): $s_1=10^{0}, s_2=2\times 10^{0}$; (C): $s_1=10^{1}, s_2=2\times 10^{1}$; (D): $s_1=10^{2}, s_2=2\times 10^{2}$; (E): $s_1=10^{-2}, s_2=2\times 10^{-2}$; (F): fixed boundary condition.}
  \label{fig:tdhf_rho}
\end{figure}

We would like to remark that the long-range potential (Coulomb and Yukawa) make the problem nonlinear in the entire space, in which case the ABCs are difficult to derive, unless further simplifications are made. However, since the two particles never propagate to the boundary, $\rho\approx 0$ outside. We expect that one can neglect the potential terms in the exterior. 

\section{Summary and discussions}
We constructed absorbing boundary conditions for time-dependent Schr\"odinger equations by first deriving the Dirichlet-to-Neumann map. We chose the starting point to be a semi-discrete approximation so that the resulting boundary condition can be readily implemented with the discretization in the interior seamlessly. The nonlocality in time in the exact boundary condition is treated by rational interpolations of the Laplace transform, which in the time domain turns into linear ODEs. For the zeroth and first-order approximations, the stability has been proved. The effectiveness and accuracy are also illustrated by various numerical tests.
For higher-order absorbing boundary conditions, a direct proof seems rather 
challenging. 
Our numerical observations are that second-order and third-order methods are still stable for a wide range of interpolation points $s$. 

In principle, the boundary conditions presented in this paper can be applied to three-dimensional problems with general geometry. The stability results do not depend on the specific configuration of the computational domain. However, a remaining challenge is to choose the  {\it optimal} interpolation conditions  to maximize the accuracy, i.e., minimize the reflection. For one-dimensional problems, or higher dimensional problems in a half-plane, one can compute the reflection coefficients and choose the optimal boundary condition by minimizing the total reflection \cite{li2006variational}. This approach breaks down when corners and edges are present. One possible remedy is   the optimal interpolation strategy from order-reduction problems \cite{anic2008interpolation, gugercin2008}.  For instance, in the current setting, one can formulate the following optimization problem
\begin{equation}
    s_0 =\arg\min  \|K(s)-R_{1,1}(s)\|_{\mathcal H_2},
\end{equation}
for the first-order boundary condition to identify the {\it best} interpolation point. This will be pursued in separate works. 

Another long-standing issue is associated with the long-range interactions among the electrons, i.e., the Coulomb interactions. This issue has been partially addressed in \cite{ehrhardt2006fast}, where the Coulomb potential is replaced by an asymptotic form. But in general, all the existing methods have not be constructed to take full account of the Coulomb potential.

\begin{appendix}
  \section{Evaluation of the Dirichlet-to-Neumann map}
  In this section, we briefly discuss the evaluation of the kernel (Eq. \eqref{eq:17}) in the Laplace domain. The key is to use the Green's function to evaluate the selected inversion. We refer readers to our previous work \cite{Wu2018,Li2012} for the detailed derivations. 
  
  The discrete Green's function $\widetilde G_{ij}$ corresponding to the discretized operator $\widetilde H_{ij}$  satisfies  $$\sum_j\widetilde H_{i,j}\widetilde G_{jk}=\delta_{i,k}, \text{for any integer $i,k$}.$$ To shorten the derivation, the notations of $\widetilde G_{\icii}$ and $\widetilde G_{\iicii}$ follow the convention of $H_{\icii}$ and $H_{\iicii}$ respectively. Furthermore, we introduce the boundary set in $\Omega_\ii$, 
  \begin{equation}
  \Sigma=\{ \bm x_j\in \Omega_\ii\vert \text{ if there exists } \bm x_k \in \Omega_\i \text{ such that } H_{jk} \ne 0\}.
\end{equation}
and matrices $$\widetilde G_{\Gamma, \Sigma}=[\widetilde G_{ij}]_{i\in \Gamma, j\in \Sigma}, \widetilde G_{\Sigma, \Sigma}=[\widetilde G_{ij}]_{i\in \Sigma, j\in \Sigma}, H_{\Gamma, \Sigma}=[H_{ij}]_{i\in \Gamma, j\in \Sigma}, H_{\Sigma, \Gamma}=[H_{ij}]_{i\in \Sigma, j\in \Gamma}.$$
  
We start with the discretized operator $\tilde{H}$ in $\Omega$. By \eqref{eq:3}, the discretized operator satisfies 
  \begin{equation}
    \label{eq:10}
    \widetilde H_\iicii\bm \Phi_\ii + \widetilde H_\iici\bm \Phi_\i=0.
  \end{equation}
  The definition of the discrete Green's function implies that
  \begin{equation}
    \label{eq:4}
    \widetilde G_\iicii\widetilde H_\iicii \bm \Phi_\ii+\widetilde G_\iici\widetilde H_\icii \bm\Phi_\ii = \bm \Phi_\ii.
  \end{equation}
  Eq. \eqref{eq:10} and Eq. \eqref{eq:4} can now be combined into the following equation
  \begin{equation}
    \label{eq:12}
    \bm \Phi_\ii= \widetilde G_\iici  H_\icii \bm\Phi_\ii-\widetilde G_\iicii H_\iici \bm \Phi_\i,
  \end{equation}
  where the fact that $\widetilde H_\icii=H_\icii$ has been used.
   By multiplying both sides of \eqref{eq:12} by $H_{\iiicii}$, we have
  \begin{equation}
    \label{eq:13}
    H_{\iiicii}\bm \Phi_\ii = H_{\iiicii}\widetilde G_{\iicii}H_\icii \bm \Phi_\ii - H_{\iiicii}\widetilde G_{\iicii}H_{\iici}\bm \Phi_\i.
  \end{equation}
  Notice that $\bm f_\Gamma=H_{\iiicii}\bm \Phi_\ii$. Making use of the sparsity of $H_{\iiicii}$ and  $H_\icii$, we obtain 
  \begin{equation}
      \bm f_\Gamma = H_{\Gamma,\Sigma}\widetilde G_{\Sigma,\Gamma} \bm f_\Gamma - H_{\Gamma,\Sigma}\widetilde G_{\Sigma,\Sigma}H_{\Sigma,\Gamma}\bm \Phi_\Gamma
  \end{equation}
  Therefore, the discrete Dirichlet-to-Neumann map is expressed as
  \begin{equation}
    \label{eq:eval}
    \bm f_\Gamma = -(I_{n_\Gamma}-H_{\Gamma,\Sigma}\widetilde G_{\Sigma,\Gamma})^{-1}H_{\Gamma,\Sigma}\widetilde G_{\Sigma,\Sigma}H_{\Sigma,\Gamma}\bm \Phi_\Gamma.
  \end{equation}
 This shows that once the Green's function is available,  the evaluation of the DtN map only involves the operations of small matrices.

  \section{Discrete Green's function for 1D Schr\"{o}dinger equation}
  \label{sec:gfun1d}
  Consider the one-dimensional time-dependent Schr\"{o}dinger equation,
  \begin{equation}
    \label{eq:tdse1d'}
    i\frac{\partial}{\partial t}\psi(t,x) + \frac12 \frac{\partial^2}{\partial x^2}\psi(t,x) = 0. 
  \end{equation}
  By taking the Laplace transform in time ($t\rightarrow s$) and Fourier transform in space ($x\rightarrow q$), the corresponding fundamental solution solves
  \begin{equation}
    \label{eq:2}
   2 siG(s, q) - q^2G(s, q) = 1,
  \end{equation}
  which gives,
  \begin{equation}
    \label{eq:a3}
    G(s, q) = \frac{1}{2si-q^2}.
  \end{equation}
  The inverse Fourier transform yields the fundamental solution in the Laplace domain 
  \begin{equation}
    G(s, x) = -\frac{1}{2\sqrt{-2si}} e^{-\sqrt{-2si}|x|}.
  \end{equation}
  $G(s, x)$ is the continuous Green's function of the following equation in the Laplace domain.
  Now, we turn to the discrete case. To obtain the discrete Green's function, we discretize the Schr\"odinger equation in the Laplace domain by the five-point scheme,
  \begin{equation}
    \label{eq:a4}
   2 si\psi_j(s) + \sum_{k=-2}^2\frac{c_k\psi_{j+k}(s)}{h^2} = 0,
  \end{equation}
  where $c_{-2}=-1/12$, $c_{-1}=4/3$, $c_0=-5/2$, $c_1=4/3$, and $c_2=-1/12$. $h$ is the grid spacing. 

  The characteristic equation of equation \eqref{eq:a4} is given by
  \begin{equation}
    \label{eq:6}
    -\frac{1}{12}u^4+\frac{4}{3}u^3+(i2sh^2-\frac{5}{2})u^2+\frac{4}{3}u-\frac{1}{12}=0.
  \end{equation}
  The four roots of the characteristic equation are given by
  \begin{equation}
    \left\{
      \begin{aligned}
        u_1 &= 4+a-\sqrt{6sh^2i+8a+24},\\
        u_2 &= 4-a+\sqrt{6sh^2i-8a+24},\\
        u_3 &= 4-a-\sqrt{6sh^2i-8a+24},\\
        u_4 &= 4+a+\sqrt{6sh^2i+8a+24},
      \end{aligned}
    \right.
  \end{equation}
  where $a = \sqrt{9+6sh^2i}$, $Re(u_1) < 1$ and $Re(u_3) < 1$ for a small $s$. We take the branch cut ($u_1$ and $u_3$) to ensure the boundedness of the Green's function. Hence, the discrete Green's function can be explicitly expressed as
  \begin{equation}
    G_j(s) = b_1u_1^{|j|} + b_2u_3^{|j|},
  \end{equation}
  where
  \begin{equation}
    \begin{aligned}
      b_1&=\frac{h^2}{2}\sqrt{\frac{3}{(3+2sh^2i)(24+6sh^2i+8a)}},\\
      b_2&=\frac{h^2}{2}\sqrt{\frac{3}{(3+2sh^2i)(24+6sh^2i-8a)}}.
    \end{aligned}
  \end{equation}
  The coefficients $b_1$ and $b_2$ are determined by the conditions $$\sum_{j=-2}^2c_{j}u_j=0 \text{ and } \sum_{j=-2}^{2}c_{j}u_j=1.$$

  We make a comparison between the continuum Green's function and analytical discrete Green's function in FIG. \ref{fig:lgf_1d}. Both the imaginary part and real part of the two Green's functions get close to each other when the distance gets large. This reveals that we can use the continuous Green's function to approximate the discrete Green's function when the distance is large, which becomes quite important especially when there is no explicit formula for the discrete Green's function.
  \begin{figure}[!htbp]
    \centering
    \includegraphics[scale=0.8]{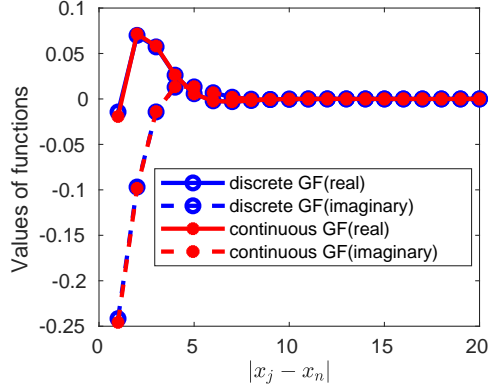}
    \caption{1D discrete Green's function versus 1D continuous Green's function, $s=1, h=1$.}
    \label{fig:lgf_1d}
  \end{figure}
  
  \section{First order ABC for 1D Schr\"{o}dinger equation}
  Let us consider the 1D Schr\"{o}dinger equation (Eq. \eqref{eq:tdse1d'}) on the interval $\Omega_\i=[x_l, x_r]$. We assume the discretization \eqref{eq:a4} of eq. \eqref{eq:tdse1d'}. $x_1,\cdots,x_N$ are $N$ grid points inside the $\Omega_\i$. $x_{-1}$, $x_0$, $x_{N+1}$, and $x_{N+2}$ are the exterior points. The input and output of the DtN map, respectively, are
  \begin{equation}
    \label{eq:29}
    \bm f_\Gamma = 
    \begin{bmatrix}
      f_{-1}\\ f_0\\ f_{N+1}\\ f_{N+2}
    \end{bmatrix}
    \text{ and }
    \bm \Phi_\Gamma = 
    \begin{bmatrix}
      \psi_{-1}\\ \psi_0 \\ \psi_{N+1} \\ \psi_{N+2}
    \end{bmatrix}
  \end{equation}
  The matrices in the DtN map are explictly written as
  \begin{equation*}
    \label{eq:36}
    H_{\Gamma,\Sigma}=
    \begin{bmatrix}
      0 & -1/12 & 0 & 0\\
      -1/12& 4/3 & 0 &0\\
      0 & 0 & 4/3 & 0\\
      0 & 0 & -1/12 & 0
    \end{bmatrix}
    ,
    G_{\Sigma, \Gamma}(s)=
    \begin{bmatrix}
      G_2(s) & G_1(s) & G_N(s) & G_{N+1}(s)\\
      G_3(s) & G_2(s) & G_{N-1}(s) & G_{N}(s)\\
      G_N(s) & G_{N-1}(s) & G_{2}(s) & G_1(s)\\
      G_{N+1}(s)& G_N(s) & G_1(s) & G_2(s)
    \end{bmatrix}
  \end{equation*}
  and
  \begin{equation*}
    \label{eq:38}
    G_{\Sigma, \Sigma}=
    \begin{bmatrix}
      G_0(s) & G_1(s) & G_{N-2}(s) & G_{N-1}(s)\\
      G_1(s) & G_0(s) & G_{N-3}(s) & G_{N-2}(s)\\
      G_{N-2}(s) & G_{N-3}(s) & G_0(s) & G_1(s)\\
      G_{N-1}(s) & G_{N-2}(s) & G_1(s) & G_0(s)
    \end{bmatrix}
  \end{equation*}
  Therefore, the kernel function $K(s)$, which is defined by \eqref{eq:d2n}, can be evaluated \eqref{eq:eval}. Even for the 1D model, the DtN map is not trivial. To demonstrate the idea of the proposed ABCs, we determine the coefficients of the first order ABC is by the moments of $K(s)$. More specifically, the coefficients of the first order ABC are obtained by solving 
  \begin{equation}
    \label{eq:39}
    \left\{
      \begin{aligned}
        (s_1I - B)^{-1}A & = K(s_1)\\
        (s_1I-B)^{-2}A &= -K'(s_1)
      \end{aligned}
    \right.
  \end{equation}
  for certain $s_1>0$. Generally, the coefficients $A$ and $B$ can not be explicitly expressed.
  \section{Discrete Green's function for the 3D Schr\"{o}dinger equation}
  \label{sec:gfun3d}
  Consider the 3D time-dependent Schr\"{o}dinger equation,
  \begin{equation}
    i\partial_t\psi(t,\bm x) + \frac12 \Delta \psi(t,\bm x) = 0.
  \end{equation}
  By following the same procedure, the corresponding fundamental solution in the Laplace space is given by
  \begin{equation}
  G(\bm x, s) = -\frac{1}{4\pi r}e^{-\sqrt{-2si}r},
\end{equation}
where $\bm x = (x_1, x_2, x_3) $ and $r=\sqrt{x_1^2+x_2^2+x_3^2}$.

For the finite-difference approximation, we discretize the operator, e.g., by the  9-point scheme in each spatial direction.
\begin{equation}
  \frac{\partial^2 u}{\partial x^2} \approx \sum_{k=-4}^4\frac{c_ku_{j+k}}{h^2}.
\end{equation}
where $c_{-4}=-1/560$, $c_{-3}=8/315$, $c_{-2}=-1/5$, $c_{-1}=8/5$, $c_0=205/72$, $c_1=8/5$, $c_2=-1/5$, $c_3=8/315$, and $c_4= -1/560$. We denote $c_{\bm k}$ as the coefficients of the 3D discretization. 

The discrete Green's function can be expressed as a Fourier integral, 
\begin{equation}
  G_{nj} = \frac{1}{|B|}\int_{B} \frac{ e^{i(\bm x_n-\bm x_j)\cdot \bm \xi}}{C(\bm \xi)}d\bm \xi, \quad C(\bm \xi) = \sum_{\bm k}c_{\bm k}e^{-i\bm c_{\bm k}\cdot \bm \xi}.
\end{equation}
Here $B$ refers to the Fourier domain associated with the finite-difference grid points. It is given by:  $\frac{\pi}{h}[-1,1]\times\frac{\pi}{h}[-1,1]\times\frac{\pi}{h}[-1,1]$ for uniform grids.

In FIG \ref{fig: g3d}, we show an example of the discrete Green's function,  obtained by a quadrature with $100\times 100 \times 100$ points in the Fourier domain, compared with the continuous Green's function. 
\begin{figure}[!htbp]
  \centering
  \includegraphics[scale=0.8]{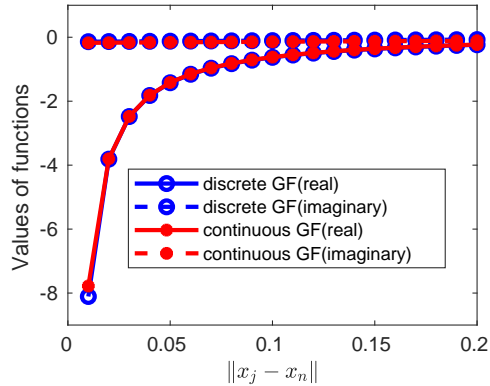}
  \caption{3D discrete Green's function and 3D continuum Green's function, $h=0.01, s=10$.}
    \label{fig: g3d} 
\end{figure}


\end{appendix}

\bibliographystyle{unsrt}
\bibliography{tdqm_update}

\end{document}